\documentclass{amsart}

\usepackage{amsmath,amsthm,amssymb}
\usepackage{enumitem}
\usepackage[T1]{fontenc}
\usepackage{graphics}
\usepackage[colorlinks,linkcolor=blue]{hyperref}
\usepackage{mathtools}
\usepackage{MnSymbol}
\usepackage{comment}

\usepackage{tikz}
\usepackage[all]{xy}

\newtheorem{theorem}{Theorem}[section]

\newtheorem{lemma}[theorem]{Lemma}
\newtheorem{proposition}[theorem]{Proposition}

\theoremstyle{definition}

\theoremstyle{remark}

\numberwithin{equation}{section}
\numberwithin{figure}{section}

\newcommand{\XX}{\mathcal{X}}

\newcommand{\NN}{\mathbf{N}}
\newcommand{\ZZ}{\mathbf{Z}}

\newcommand{\ff}{\mathbf{f}}
\newcommand{\kk}{\mathbf{k}}
\newcommand{\nn}{\mathbf{n}}
\newcommand{\mm}{\mathbf{m}}

\begin{document}
\title{Fibonacci numbers and a metric on coprime pairs}
\author{Mitsuaki Kimura}
\address{Department of Mathematics, Osaka Deltal Univercity, Japan}
\email{kimura-m@cc.osaka-dent.ac.jp} 

\begin{abstract}
In this paper, we introduce a metric on the set of pairs of coprime natural numbers. 
We explicitly construct a quasi-isometric embedding from the set of natural numbers into this metric space via Fibonacci numbers.
\end{abstract}

\maketitle



\section{Introduction}
Let $\NN = \{1, 2, 3, \dots\}$ be the set of natural numbers.
We consider the set
\[\XX = \{  \{ n_1,n_2\} \subset \NN \mid \gcd(n_1,n_2)=1 \} \]
 of pairs of coprime natural numbers.
Note that $\{1,1\}=\{1\} \in \XX$.
In this paper, we introduce a natural metric on $\XX$, and observe that consecutive pairs of Fibonacci numbers behave well in this space.

The metric on $\XX$ is defined as follows.
For $\nn = \{n_1, n_2\} \in \XX$, the pair $\nn$ generates the additive group $\ZZ$ of integers.  
Hence every $m \in \NN$ can be expressed as $m = n_1 x + n_2 y$ for some $x,y \in \ZZ$. We define
\[
  q_{\nn}(m)
  = \min \{ |x| + |y| \mid m = n_1 x + n_2 y, \; x, y \in \ZZ \}
\]
and set $q_{\nn}(\mm)   = \max\limits_{m \in \mm} \{ q_{\nn}(m) \}$ for $\mm, \nn \in \XX$.
Then $q_{\nn}$ satisfies a submultiplicative inequality (Lemma~\ref{lem:triangle}).  
For $a>1$, we define
\[
  d_a(\mm, \nn)
  = \log_a \bigl( \max \{ q_{\nn}(\mm), q_{\mm}(\nn) \} \bigr).
\]
Then $d_a$ is a metric on $\XX$ (Proposition~\ref{prop:metric}).

Our definition of the metric $d_a$ is motivated by Tsuboi's construction \cite{Tsuboi}; see also \cite{Kodama, Ishida, Relative}.
Note that $d_a$ can be regarded as the restriction of the metric $\hat\varrho$ defined in \cite[Definition 3.4]{Yagasaki} to the case where $G=\ZZ$.
The metric space $(\XX,d_a)$ will be further studied in the forthcoming work \cite{Filtered}.

We briefly recall the notion of \emph{quasi-isometry}, a fundamental concept in large-scale geometry. For further details, see \cite{GGT} for example.
Let $K \ge 1$ and $L \ge 0$.  
A map $f \colon (X, d_X) \to (Y, d_Y)$ between metric spaces is called a \emph{$(K,L)$-quasi-isometric embedding} if, for all $x_1, x_2 \in X$,
\[
  \frac{1}{K} d_X(x_1, x_2) - L
  \; \le \; 
  d_Y\bigl(f(x_1), f(x_2)\bigr)
  \; \le \;
  K d_X(x_1, x_2) + L.
\]
We say that $X$ and $Y$ are \emph{quasi-isometric} if there exist quasi-isometric embeddings from $X$ to $Y$ and from $Y$ to $X$.
For $a, b > 1$, we have $d_a(\mm, \nn) = (\log_a b) d_b(\mm, \nn)$,
and hence $(\XX, d_a)$ and $(\XX, d_b)$ are quasi-isometric.
Thus, the quasi-isometry class of the metric space $(\XX, d_a)$ does not depend on the choice of $a>1$.  

Let $F_n$ denote the $n$-th Fibonacci number. 
Since consecutive Fibonacci numbers are coprime, we have $\{F_n, F_{n+1}\} \in \XX$  for $n\in \NN$.  
Let $\varphi = \frac{1 + \sqrt{5}}{2}$ denote the golden ratio, and set $d = d_{\varphi}$.  
Our main result is the following:

\begin{theorem}\label{thm:main}
The map 
\[
  (\NN, |\cdot|) \longrightarrow (\XX, d),
  \qquad
  n \longmapsto \{F_{n}, F_{n+1}\},
\]
is a $(1,1)$-quasi-isometric embedding.
\end{theorem}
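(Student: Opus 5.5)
We prove the two inequalities
\[
|m-n| - 1 \;\le\; d\bigl(\{F_m,F_{m+1}\},\{F_n,F_{n+1}\}\bigr) \;\le\; |m-n| + 1 .
\]
The strategy is to use standard Fibonacci identities for the upper bound and a trivial size estimate for the lower bound. Fix $n \le m$ and put $k = m-n$. If $k=0$ both sides are trivial, since $d=0$. So assume $k \ge 1$. Write $\nn=\{F_n,F_{n+1}\}$ and $\mm=\{F_m,F_{m+1}\}$. Throughout we use the standard bounds
\[
\varphi^{j-2} \le F_j \le \varphi^{j-1} \qquad (j\ge 1),
\]
which are proved by induction from $\varphi^2=\varphi+1$.

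\emph{Upper bound.} We use the addition formula $F_{n+k} = F_k F_{n+1} + F_{k-1}F_n$ (with $F_0=0$). It gives $q_{\nn}(F_m) \le F_k + F_{k-1} = F_{k+1}$. Applied with $k+1$ in place of $k$, it gives $q_{\nn}(F_{m+1}) \le F_{k+2}$. Hence $q_{\nn}(\mm) \le F_{k+2}$.

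For the reverse direction we need to express $F_n$ and $F_{n+1}$ through $F_m$ and $F_{m+1}$. For this we use the d'Ocagne-type identity
\[
F_{n} = (-1)^{k}\bigl(F_{k+1}F_{m} - F_k F_{m+1}\bigr).
\]
It follows by inverting the matrix $\begin{pmatrix}F_{k+1}&F_k\\F_k&F_{k-1}\end{pmatrix}$, which has determinant $(-1)^k$; alternatively, check it by induction on $k$. The same argument gives an analogous formula for $F_{n+1}$ with coefficients $F_k$ and $F_{k-1}$. Both have coefficient sum at most $F_{k+2}$, so $q_{\mm}(\nn) \le F_{k+2}$.

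Therefore $\max\{q_\nn(\mm),q_\mm(\nn)\} \le F_{k+2} \le \varphi^{k+1}$, and so $d(\mm,\nn)\le k+1$.

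\emph{Lower bound.} Suppose $F_{m+1} = xF_n + yF_{n+1}$ with $x,y\in\ZZ$. Since $F_n\le F_{n+1}$, we get $F_{m+1} \le (|x|+|y|)F_{n+1}$. Hence
\[
q_{\nn}(\mm) \;\ge\; q_{\nn}(F_{m+1}) \;\ge\; \frac{F_{m+1}}{F_{n+1}} \;\ge\; \frac{\varphi^{m-1}}{\varphi^{n}} \;=\; \varphi^{k-1}.
\]
Thus $d(\mm,\nn) \ge k-1$.

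Combining the two bounds shows that the map is a $(1,1)$-quasi-isometric embedding. The only step needing care is verifying the two Fibonacci identities with the correct signs, including the edge cases $k=1$ and $n=1$ (where $\nn=\{1\}$). These follow from the matrix identity $\begin{pmatrix}1&1\\1&0\end{pmatrix}^j=\begin{pmatrix}F_{j+1}&F_j\\F_j&F_{j-1}\end{pmatrix}$. The argument never uses Lemma~\ref{lem:triangle} or Proposition~\ref{prop:metric}, beyond the fact that $d$ is well defined.
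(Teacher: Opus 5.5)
Your proof is correct and follows essentially the same route as the paper. The lower bound comes from $F_{m+1}\le(|x|+|y|)F_{n+1}$ together with the golden-ratio estimates, and the upper bound from explicit solutions giving $\max\{q_{\nn}(\mm),q_{\mm}(\nn)\}\le F_{k+2}\le\varphi^{k+1}$; your d'Ocagne-type identity for the reverse direction is exactly the paper's Honsberger identity $F_{a+b}=F_aF_{b+1}+F_{a-1}F_b$ taken with a negative index and combined with $F_{-j}=(-1)^{j+1}F_j$.
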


The paper is organized as follows.
Section~\ref{sec:prelim} includes a proof that $d_a$ defines a metric on $\XX$, and recalls classical properties of Fibonacci numbers.
Section~\ref{sec:main} contains the proof of Theorem~\ref{thm:main}.
Section~\ref{sec:general} discusses further generalizations of Theorem~\ref{thm:main}.

\section{Preliminaries} \label{sec:prelim}

\subsection{A Metric on Coprime Pairs} \label{sec:metric}
We prove that $d_a$ defines a metric on $\XX$. Although this follows from the general framework in \cite{Yagasaki, Filtered}, we include a proof for completeness.

\begin{lemma}\label{lem:triangle}
For all $\kk, \mm, \nn \in \XX$, we have
\[
  q_{\nn}(\kk)
  \le
  q_{\mm}(\kk)\, q_{\nn}(\mm).
\]
\end{lemma}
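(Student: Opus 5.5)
The plan is to prove the inequality one element at a time. It suffices to show that for every $k \in \NN$ (in particular every $k \in \kk$) we have
\[
  q_{\nn}(k) \le q_{\mm}(k)\, q_{\nn}(\mm),
\]
and then take the maximum over $k \in \kk$. The key observation is that $q_{\nn}$, extended to all of $\ZZ$ by the same formula, is a norm-like function on $\ZZ$. It is subadditive, $q_{\nn}(a+b) \le q_{\nn}(a) + q_{\nn}(b)$, since representations of $a$ and $b$ can be added coefficientwise. It is also symmetric, $q_{\nn}(-a) = q_{\nn}(a)$, since a representation can be negated. Both facts follow directly from the triangle inequality for $|\cdot|$ on the coefficients.

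Now write $\mm = \{m_1, m_2\}$ and choose $x, y \in \ZZ$ realizing the minimum, so that $k = m_1 x + m_2 y$ with $|x| + |y| = q_{\mm}(k)$. Expanding $m_1 x$ as $|x|$ copies of $\pm m_1$, and similarly for $m_2 y$, subadditivity and symmetry give
\[
  q_{\nn}(k) \le |x|\, q_{\nn}(m_1) + |y|\, q_{\nn}(m_2) \le (|x| + |y|) \max\{q_{\nn}(m_1), q_{\nn}(m_2)\} = q_{\mm}(k)\, q_{\nn}(\mm).
\]
If one prefers explicit representations, choose minimizing $m_i = n_1 u_i + n_2 v_i$. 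Then $k = n_1(x u_1 + y u_2) + n_2(x v_1 + y v_2)$, and bounding $|x u_1 + y u_2| + |x v_1 + y v_2|$ by $|x|(|u_1| + |v_1|) + |y|(|u_2| + |v_2|)$ gives the same estimate.

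There is one degenerate case to check: when $\mm = \{1\}$ is a singleton, the representation is $k = 1 \cdot x$, and the same argument applies with a single coefficient. No step is a real obstacle. The only point needing care is that $q_{\nn}$ is defined on $\NN$ in the paper but we need it on negative integers or $0$ in intermediate steps. This is avoided entirely by the explicit-coefficient version above, which never evaluates $q_{\nn}$ at non-positive arguments.
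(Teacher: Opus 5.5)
Your proposal is correct and, in its explicit-coefficient form, is essentially the paper's argument. The paper likewise composes a representation of $k$ in terms of $\mm$ with representations of $m_1, m_2$ in terms of $\nn$, written as a product of $2\times 2$ coefficient matrices, and bounds $|xu_1+yu_2|+|xv_1+yv_2|$ by $(|x|+|y|)\max_i(|u_i|+|v_i|)$, so your subadditivity phrasing and separate treatment of $\mm=\{1\}$ are harmless extras.
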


\begin{proof}
Let $\kk = \{k_1,k_2\}$, $\mm = \{m_1,m_2\}$ and $\nn = \{n_1,n_2\}$, and set
$  M = q_{\mm}(\kk)$, $N = q_{\nn}(\mm)$. 
Since $q_{\mm}(k_1) \le M$ and $q_{\mm}(k_2) \le M$, there exist integers $x_1,y_1,x_2,y_2 \in \ZZ$ such that
\begin{align*}
  k_1 = m_1 x_1 + m_2 y_1, & \qquad |x_1| + |y_1| \le M, \\
  k_2 = m_1 x_2 + m_2 y_2, & \qquad |x_2| + |y_2| \le M.
\end{align*}
Similarly, since $q_{\nn}(m_1) \le N$ and $q_{\nn}(m_2) \le N$, there exist integers
$x'_1,y'_1,x'_2,y'_2 \in \ZZ$ such that
\begin{align*}
  m_1 = n_1 x'_1 + n_2 y'_1, & \qquad |x'_1| + |y'_1| \le N, \\
  m_2 = n_1 x'_2 + n_2 y'_2, & \qquad |x'_2| + |y'_2| \le N.
\end{align*}

Combining these, we have
\[
\begin{bmatrix}
  k_1 \\ k_2
\end{bmatrix}
=
\begin{bmatrix}
  x_1 & y_1 \\
  x_2 & y_2
\end{bmatrix}
\begin{bmatrix}
  x'_1 & y'_1 \\
  x'_2 & y'_2
\end{bmatrix}
\begin{bmatrix}
  n_1 \\ n_2
\end{bmatrix}.
\]
Hence
\begin{align*}
  q_{\nn}(k_1)
  &\le
  |x_1 x'_1 + y_1 x'_2|
  + |x_1 y'_1 + y_1 y'_2| \\
  &\le |x_1|(|x'_1| + |y'_1|)
      + |y_1|(|x'_2| + |y'_2|) \\
  &\le (|x_1| + |y_1|)\, N \\
  &\le MN.
\end{align*}
The same argument applies to $k_2$; hence,
$  q_{\nn}(\kk) \le MN = q_{\mm}(\kk)\, q_{\nn}(\mm). $
\end{proof}

\begin{proposition} \label{prop:metric}
For $a>1$, $d_a$ is a metric on $\XX$.
\end{proposition}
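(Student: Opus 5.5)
We verify the metric axioms for $d_a(\mm,\nn)=\log_a\max\{q_{\nn}(\mm),q_{\mm}(\nn)\}$ in turn: well-definedness and nonnegativity, symmetry, the triangle inequality, and that $d_a(\mm,\nn)=0$ forces $\mm=\nn$. Only the last axiom needs a real argument.

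\emph{Nonnegativity.} For any $m\in\NN$, the minimum defining $q_{\nn}(m)$ is a positive integer. It is not zero, since $x=y=0$ gives $m=0$. Hence $q_{\nn}(\mm)\ge 1$, and $d_a(\mm,\nn)\ge 0$ is well defined.

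\emph{Symmetry.} This is immediate, because the definition is symmetric in $\mm$ and $\nn$.

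\emph{Triangle inequality.} By Lemma~\ref{lem:triangle},
\[
q_{\nn}(\kk)\le q_{\mm}(\kk)\,q_{\nn}(\mm)
\quad\text{and}\quad
q_{\kk}(\nn)\le q_{\mm}(\nn)\,q_{\kk}(\mm).
\]
Each factor on the right is bounded by $a^{d_a(\kk,\mm)}$ or $a^{d_a(\mm,\nn)}$. So the maximum of $q_{\nn}(\kk)$ and $q_{\kk}(\nn)$ is at most $a^{d_a(\kk,\mm)+d_a(\mm,\nn)}$, and taking $\log_a$ gives the inequality.

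\emph{Reflexivity.} First, $d_a(\nn,\nn)=0$: each $n_i$ is expressed as $1\cdot n_i$, so $q_{\nn}(n_i)\le 1$, hence $q_{\nn}(\nn)=1$.

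\emph{Definiteness.} This is the main point. Assume $q_{\nn}(\mm)=q_{\mm}(\nn)=1$. Then $q_{\nn}(m)=1$ means $m=\pm n_1$ or $m=\pm n_2$, and since $m>0$ we get $m\in\nn$. Thus every element of $\mm$ lies in $\nn$, giving $\mm\subseteq\nn$. By symmetry $\nn\subseteq\mm$, so $\mm=\nn$ as sets. The degenerate case $\{1,1\}=\{1\}$ is handled by the same set-inclusion argument. I expect no real obstacle here beyond being careful that the ``pairs'' are sets, possibly singletons.
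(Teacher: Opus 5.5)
Your proof is correct and follows the paper's argument: symmetry from the definition, the triangle inequality from Lemma~\ref{lem:triangle} (applied in both directions, as you do), and definiteness from the fact that $q_{\nn}(m)=1$ exactly when $m\in\nn$, so $q_{\nn}(\mm)=1$ if and only if $\mm\subseteq\nn$. You also spell out $q_{\nn}(\mm)\ge 1$ and $d_a(\nn,\nn)=0$, which the paper leaves implicit in its ``if and only if'' formulation.
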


\begin{proof}
The triangle inequality follows from Lemma~\ref{lem:triangle}, and symmetry is immediate from the definition. It remains to check non-degeneracy.  Observe that $q_{\nn}(m)=1$ if and only if $m \in \nn$; hence $q_{\nn}(\mm)=1$ if and only if $\mm \subset \nn$. Thus $d_a(\mm,\nn)=0$ holds precisely when $\mm=\nn$. Therefore, $d_a$ is a metric.
\end{proof}

\subsection{Fibonacci Numbers} \label{sec:fibonacci}
We recall classical properties of Fibonacci numbers for later use.
The Fibonacci sequence $(F_n)_{n \ge 0}$ is defined by the recurrence relation
\[
  F_0 = 0,\quad F_1 = 1,\qquad F_{n+2} = F_{n+1} + F_n.
\]
This definition naturally extends to negative indices, yielding a bi-infinite sequence. In particular,  for all $n \in \NN$,
\[
  F_{-n} = (-1)^{n+1} F_n.
\]

\begin{lemma}[Honsberger's identity] \label{lem:honsberger}
For all $m, n \in \ZZ$, the following holds:
\[
  F_{m+n} = F_m F_{n+1} + F_{m-1} F_n.
\]
\end{lemma}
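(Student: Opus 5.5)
We prove Honsberger's identity by fixing $m$ and treating both sides as functions of $n$. Each side then satisfies the Fibonacci recurrence in $n$, so it suffices to check agreement at two consecutive values of $n$. The same argument then covers all $n \in \ZZ$, positive and negative.

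\emph{Step 1: both sides are Fibonacci-type sequences in $n$.} Fix $m \in \ZZ$ and set $A(n) = F_{m+n}$ and $B(n) = F_m F_{n+1} + F_{m-1} F_n$. The bi-infinite sequence $(F_k)_{k\in\ZZ}$ satisfies $F_{k+2} = F_{k+1} + F_k$ for every $k \in \ZZ$, since this is exactly how the negative indices are defined. Hence $A(n+2) = A(n+1) + A(n)$ for all $n$. Because $B$ is a fixed linear combination of the two shifted sequences $F_{n+1}$ and $F_n$, it satisfies the same recurrence: $B(n+2) = B(n+1) + B(n)$.

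\emph{Step 2: initial values.} At $n=0$ we have $B(0) = F_m F_1 + F_{m-1}F_0 = F_m = A(0)$. At $n=1$ we have $B(1) = F_m F_2 + F_{m-1} F_1 = F_m + F_{m-1} = F_{m+1} = A(1)$, using $F_2 = 1$ and the recurrence.

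\emph{Step 3: propagate in both directions.} A sequence satisfying $u(n+2) = u(n+1) + u(n)$ on all of $\ZZ$ is determined by two consecutive values. Going forward this is immediate from the recurrence. Going backward it follows from the rearranged form $u(n) = u(n+2) - u(n+1)$. Induction upward and downward from $n=0,1$ therefore gives $A(n) = B(n)$ for all $n \in \ZZ$.

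There is no real obstacle here. The only point needing care is to use the recurrence as valid on all of $\ZZ$, which holds by the definition of negative indices, so that the downward induction is legitimate. Once that is granted, the formula $F_{-n} = (-1)^{n+1}F_n$ is not needed.
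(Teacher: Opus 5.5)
Your proof is correct, but it takes a different route from the paper's.

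\textbf{The paper's route.} It uses the matrix $A=\begin{bmatrix}1&1\\1&0\end{bmatrix}$. It shows $A^n=\begin{bmatrix}F_{n+1}&F_n\\F_n&F_{n-1}\end{bmatrix}$ by induction for $n\in\NN$, and extends this to all $n\in\ZZ$ using $F_{-n}=(-1)^{n+1}F_n$. The identity is then the $(2,1)$ entry of $A^{m+n}=A^mA^n$.

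\textbf{Your route.} You use the fact that a solution of the two-sided recurrence is determined by two consecutive values. You check $n=0,1$ and induct in both directions. The details are sound: $B(n)$ inherits the recurrence, $F_2=1$, and $F_{m+1}=F_m+F_{m-1}$ holds for every $m\in\ZZ$.

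\textbf{What each buys.} Your argument is more economical and uniform. Negative indices are handled exactly like positive ones, and you never need the sign formula $F_{-n}=(-1)^{n+1}F_n$. The paper invokes that formula only briefly; making it precise requires something like inverting $A^n$ using $\det A^n=(-1)^n$. The matrix argument gives more structure instead: the identity is one entry of the homomorphism $n\mapsto A^n$, and the other entries and the determinant yield companion identities, such as Cassini's, at no extra cost. Both arguments carry over verbatim to the $k$-Fibonacci version (Lemma~\ref{lem:k-honsberger}). In yours, you replace the recurrence by $u(n+2)=k\,u(n+1)+u(n)$ and use $F_{k,2}=k$ in the check at $n=1$.
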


\begin{proof}
Set $A =
\begin{bmatrix}
  1 & 1 \\ 1 & 0
\end{bmatrix}.$
For all $n \in \NN$, we can verifies by induction that
\[
A^n =
\begin{bmatrix}
  F_{n+1} & F_n \\ F_n & F_{n-1}
\end{bmatrix}.
\]
Using the identity $F_{-n} = (-1)^{n+1} F_n$, the same formula holds for all $n \in \ZZ$. Comparing the entries of $A^{m+n} = A^m A^n$ yields the desired identity.
\end{proof}

\begin{lemma}\label{lem:golden_ratio}
Let $\varphi = \frac{1+\sqrt{5}}{2}$ denote the golden ratio. For all $n \in \NN$, we have
\[
  \varphi^{n-2} \le F_n \le \varphi^{n-1}.
\]
\end{lemma}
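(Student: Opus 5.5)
We prove Lemma~\ref{lem:golden_ratio} by strong induction on $n$, using only the recurrence $F_{n+2}=F_{n+1}+F_n$ and the defining identity $\varphi^2=\varphi+1$ of the golden ratio. Both inequalities propagate in the same way. Suppose $a_n$ is either $\varphi^{n-2}$ or $\varphi^{n-1}$. In both cases $a_{n+2}=a_{n+1}+a_n$, because $\varphi^{k+2}=\varphi^{k+1}+\varphi^k$ for every integer $k$ (multiply $\varphi^2=\varphi+1$ by $\varphi^k$). So if $a_n\le F_n$ and $a_{n+1}\le F_{n+1}$, then adding gives $a_{n+2}\le F_{n+2}$, and similarly with $\ge$. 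The inductive step is therefore immediate, and everything reduces to checking two consecutive base cases.

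For the base cases take $n=1$ and $n=2$. For $n=1$ we need $\varphi^{-1}\le F_1=1\le\varphi^0=1$, which holds since $\varphi>1$. For $n=2$ we need $\varphi^{0}=1\le F_2=1\le\varphi^{1}$, which again holds since $\varphi>1$. Strong induction then gives $\varphi^{n-2}\le F_n\le\varphi^{n-1}$ for all $n\in\NN$.

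No step is a real obstacle. The only point needing care is that the induction uses two consecutive base cases, since the recurrence is of second order, and that $\varphi^k$ satisfies the same recurrence exactly. As an alternative, one could use Binet's formula $F_n=(\varphi^n-\psi^n)/\sqrt5$ with $\psi=-1/\varphi$ and bound $|\psi|^n$ directly, but the induction is cleaner.
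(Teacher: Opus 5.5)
Your proof is correct and matches the paper's approach: induction on $n$ using $\varphi^2=\varphi+1$. You also make explicit what the paper leaves implicit, namely the two base cases $n=1,2$ and the observation that $\varphi^{k}$ satisfies the same recurrence as $F_k$, so both bounds propagate by addition.
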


\begin{proof}
The proof proceeds by induction, using the identity $\varphi^2 = \varphi + 1$.
\end{proof}
\section{Main Result} \label{sec:main}

In this section, we prove Theorem~\ref{thm:main}. For $n \in \NN$, set $\ff_n = \{F_n,F_{n+1}\}$. 
The statement of Theorem~\ref{thm:main} concerns the estimation of
\[
d(\ff_m,\ff_n) = \log_\varphi \bigl( \max\{ q_{\ff_n}(\ff_m), q_{\ff_m}(\ff_n) \} \bigr),
\]
and thus the proof reduces to bounding 
\[
q_{\ff_n}(F_m) = \min\{ |x|+|y| \mid F_m = xF_n + yF_{n+1},\; x,y \in \ZZ \}.
\]
 from above and below. 
 Accordingly, we study the Diophantine equation
\begin{equation}\label{eq:diophantus}
F_m = xF_n + yF_{n+1}.
\end{equation}


\begin{lemma}[Lower bound]\label{lem:lower}
Let $m,n \in \NN$ with $m>n$. Then
\[
q_{\ff_n}(F_m) \ge \frac{F_m}{F_{n+1}}.
\]
\end{lemma}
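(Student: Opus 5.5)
Take any representation $F_m = xF_n + yF_{n+1}$ with $x,y \in \ZZ$ realizing the minimum in the definition of $q_{\ff_n}(F_m)$. By the triangle inequality,
\[
F_m = |xF_n + yF_{n+1}| \le |x|F_n + |y|F_{n+1} \le (|x|+|y|)\,F_{n+1},
\]
where the first equality uses that $F_m>0$ for $m \ge 1$, and the last step uses $F_n \le F_{n+1}$, which holds for all $n \in \NN$ because $F_1 = F_2 = 1$ and the sequence is nondecreasing from there. Dividing by $F_{n+1}>0$ gives $|x|+|y| \ge F_m/F_{n+1}$. Since the representation was arbitrary, taking the minimum over all of them yields $q_{\ff_n}(F_m) \ge F_m/F_{n+1}$.

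This argument never uses the hypothesis $m>n$. That hypothesis is presumably stated only because it is the regime in which the bound will be applied: there $F_m/F_{n+1} \ge 1$ is the meaningful lower estimate, which, combined with Lemma~\ref{lem:golden_ratio}, should give $q_{\ff_n}(F_m) \gtrsim \varphi^{m-n-2}$ and hence the lower bound $d(\ff_m,\ff_n) \ge |m-n| - 1$ needed in Theorem~\ref{thm:main}.

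There is no real obstacle here. The only point needing care is confirming that the monotonicity $F_n \le F_{n+1}$ holds already at the start of the sequence, at $n=1$, which it does since $F_1 = F_2 = 1$.
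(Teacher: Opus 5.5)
Your proof is correct and is the same argument as the paper's: bound $F_m = |xF_n + yF_{n+1}|$ by $(|x|+|y|)F_{n+1}$ using the triangle inequality and $F_n \le F_{n+1}$, then minimize over all solutions. You are also right that the hypothesis $m>n$ is never used; the paper's proof does not use it either.
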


\begin{proof}
If $(x,y)$ is a solution of \eqref{eq:diophantus}, then
\[
F_m = |xF_n + yF_{n+1}| \le |x|F_n + |y|F_{n+1} \le (|x|+|y|)F_{n+1},
\]
which yields the desired inequality.
\end{proof}

\begin{lemma}[Upper bound]\label{lem:upper}
Let $m,n \in \NN$. Then
\[
q_{\ff_n}(F_m) \le |F_{m-n-1}| + |F_{m-n}| =
\begin{cases}
F_{m-n+1}, & \text{if $m \ge n$},\\
F_{n-m+2}, & \text{if $m \le n$}.
\end{cases}
\]
\end{lemma}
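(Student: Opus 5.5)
The plan is to write down an explicit solution of \eqref{eq:diophantus} using Honsberger's identity (Lemma~\ref{lem:honsberger}), and then evaluate the resulting $|x|+|y|$ in closed form. Substitute $m-n$ for $m$ and $n$ for $n$ in Lemma~\ref{lem:honsberger}. This gives
\[
F_m = F_{m-n}F_{n+1} + F_{m-n-1}F_n ,
\]
valid for all integers because the lemma holds on all of $\ZZ$. So $(x,y)=(F_{m-n-1},F_{m-n})$ solves \eqref{eq:diophantus}. By the definition of $q_{\ff_n}(F_m)$ as a minimum, we get $q_{\ff_n}(F_m)\le |F_{m-n-1}|+|F_{m-n}|$, which is the inequality claimed.

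It remains to identify the right-hand side with the stated Fibonacci numbers; I split on the sign of $k=m-n$.

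If $m\ge n$, then $k\ge 0$ and $k-1\ge -1$. For $k\ge1$, both $F_{k-1}$ and $F_k$ are nonnegative, so the sum is $F_{k-1}+F_k=F_{k+1}=F_{m-n+1}$. For $k=0$, we have $|F_{-1}|+|F_0|=1+0=1=F_1$, using $F_{-1}=F_1=1$.

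If $m\le n$, write $j=n-m\ge0$. By $F_{-n}=(-1)^{n+1}F_n$, we have $|F_{m-n}|=F_j$ and $|F_{m-n-1}|=F_{j+1}$, so the sum is $F_j+F_{j+1}=F_{j+2}=F_{n-m+2}$.

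At $m=n$ the two formulas agree: the first gives $F_1=1$ and the second gives $F_2=1$. This is consistent with the overlap of the cases.

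The only point needing care is the bookkeeping of signs for negative indices, in particular the boundary values $k=0$ and $k=-1$. Lemma~\ref{lem:honsberger} already covers all integer indices, so no further obstacle is expected.
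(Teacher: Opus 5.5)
Your proof is correct and follows the paper's argument: the paper also takes $(x,y)=(F_{m-n-1},F_{m-n})$ from Honsberger's identity and evaluates $|F_{m-n-1}|+|F_{m-n}|$ using $F_{-n}=(-1)^{n+1}F_n$. Your case split on the sign of $m-n$, including the boundary value $m=n$, simply spells out the details the paper leaves implicit.
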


\begin{proof}
By Lemma~\ref{lem:honsberger}, the pair $(x,y) = (F_{m-n-1}, F_{m-n})$ provides a solution to \eqref{eq:diophantus}. Combining this with the identity $F_{-n} = (-1)^{n+1} F_n$ yields the desired result.
\end{proof}

\begin{proof}[Proof of Theorem~{\ref{thm:main}}]
For $m,n \in \NN$, we prove that
\begin{equation} \label{eq:QI}
|m-n| -1 \leq d( \ff_m, \ff_n ) \leq |m-n| +1.
\end{equation}

The case $m=n$ is trivial. 
We may assume without loss of generality that $m>n$.  
By Lemmas~\ref{lem:golden_ratio} and~\ref{lem:lower}, we obtain
\[
\log_\varphi q_{\ff_n}(F_m)  \geq \log_\varphi  F_m - \log_\varphi F_{n+1} \geq (m-2)-n.
\]
Together with $\log_\varphi q_{\ff_n}(F_{m+1}) \geq m-n-1$, this implies
\begin{equation} \label{eq:lower}
\log_\varphi q_{\ff_n}(\ff_m) \geq m-n-1.
\end{equation}

On the other hand, by Lemmas~\ref{lem:golden_ratio} and~\ref{lem:upper},
\[
\log_\varphi q_{\ff_n}(F_m) \leq \log_\varphi F_{m-n+1} \leq m-n
\]
holds. Hence, together with $\log_\varphi q_{\ff_n}(F_{m+1}) \leq m-n+1$, we obtain
\begin{equation} \label{eq:m>n}
\log_\varphi q_{\ff_n}(\ff_m) \leq m-n+1.
\end{equation}

Similarly, from
\[
\log_\varphi q_{\ff_m}(F_n) \leq \log_\varphi F_{m-n+2} \leq m-n+1
\]
and $\log_\varphi q_{\ff_m}(F_{n+1}) \leq m-n$, we obtain
\begin{equation} \label{eq:m<n}
\log_\varphi q_{\ff_m}(\ff_n) \leq m-n+1.
\end{equation}

Combining \eqref{eq:lower}, \eqref{eq:m>n}, and \eqref{eq:m<n}, we conclude that
\[
 m-n-1
 \leq \log_\varphi q_{\ff_m}(\ff_n)
 \leq \log_\varphi \bigl( \max\{  q_{\ff_n}(\ff_m),  q_{\ff_m}(\ff_n) \}\bigr)
 \leq m-n+1,
\]
which proves~\eqref{eq:QI}.
\end{proof}

\section{Generalizations} \label{sec:general}

\subsection{$k$-Fibonacci Numbers}
For $k \in \NN$,  the \emph{$k$-Fibonacci sequence} $(F_{k,n})_{n \in \ZZ}$ is defined as in \cite{k-Fibonacci} by the recurrence
\[
  F_{k,0} = 0, \quad F_{k,1} = 1, \qquad
  F_{k,n+2} = k F_{k,n+1} + F_{k,n}.  
\]
It satisfies $F_{k,-n}=(-1)^{n+1}F_{k,n}$ for all $n \in \NN$.
Note that $F_{1,n}$ denotes the standard Fibonacci numbers, and $F_{2,n}$ denotes the Pell numbers.

The positive root of the quadratic equation $x^2 = kx + 1$,
\[
  \varphi_k := \frac{k + \sqrt{k^2 + 4}}{2},
\]
is referred to as the \emph{$k$-th metallic ratio}. For example, $\varphi_1=\frac{1+\sqrt{5}}{2}$ is the golden ratio, and $\varphi_2=1+\sqrt{2}$ is the silver ratio.

We now generalize Theorem~\ref{thm:main}; setting $k=1$ yields the original statement.
\begin{theorem}\label{thm:k-main}
Let $k \in \NN$. The map
\[
  (\NN, |\cdot|) \longrightarrow (\XX, d_{\varphi_k}),
  \qquad
  n \longmapsto \{F_{k,n}, F_{k,n+1}\},
\]
is a $(1,1)$-quasi-isometric embedding.
\end{theorem}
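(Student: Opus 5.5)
The plan is to repeat the proof of Theorem~\ref{thm:main} almost verbatim, with $F_n$ replaced by $F_{k,n}$ and $\varphi$ by $\varphi_k$. Before that, I would establish the three ingredients it used in $k$-Fibonacci form. Throughout I write $F_j$ for $F_{k,j}$ and $\varphi$ for $\varphi_k$.

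\emph{Honsberger-type identity.} Set $A_k=\begin{bmatrix} k & 1\\ 1 & 0\end{bmatrix}$. By induction, and then using $F_{k,-n}=(-1)^{n+1}F_{k,n}$, we have
\[
A_k^n=\begin{bmatrix} F_{k,n+1} & F_{k,n}\\ F_{k,n} & F_{k,n-1}\end{bmatrix}
\]
for all $n\in\ZZ$. Comparing entries of $A_k^{m+n}=A_k^mA_k^n$ gives
\[
F_{k,m+n}=F_{k,m}F_{k,n+1}+F_{k,m-1}F_{k,n}.
\]
In particular, $(x,y)=(F_{k,m-n-1},F_{k,m-n})$ solves $F_{k,m}=xF_{k,n}+yF_{k,n+1}$. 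This also shows that consecutive terms are coprime, since $\det A_k=-1$. So $\{F_{k,n},F_{k,n+1}\}\in\XX$.

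\emph{Growth bounds.} The analogue of Lemma~\ref{lem:golden_ratio} is $\varphi^{n-2}\le F_n\le\varphi^{n-1}$ for $n\in\NN$. The base cases are $F_1=1$ and $\varphi^{-1}<1=\varphi^0$, and $F_2=k$ with $1\le k\le\varphi$. The induction step uses $kF_{n+1}+F_n\le k\varphi^n+\varphi^{n-1}=\varphi^{n-1}(k\varphi+1)=\varphi^{n+1}$, and the same computation for the lower bound.

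\emph{Monotonicity.} We have $F_n\le F_{n+1}$ for $n\ge1$, because $k\ge1$.

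With these in hand, the lower bound of Lemma~\ref{lem:lower} carries over unchanged. For $m>n$,
\[
q_{\ff_n}(F_m)\ge \frac{F_m}{F_{n+1}}\ge \varphi^{m-2-n},
\]
and this already gives $\log_\varphi q(\ff_m,\ff_n)\ge m-n-1$.

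For the upper bound, Lemma~\ref{lem:upper} carries over as $q_{\ff_n}(F_m)\le |F_{m-n-1}|+|F_{m-n}|$. For $k>1$ this sum is no longer a single term $F_{m-n+1}$, so I would replace that step by the estimate
\[
F_{j-1}+F_j\le \varphi^{j-2}+\varphi^{j-1}=\varphi^{j-2}(1+\varphi)\le\varphi^{j-2}\cdot\varphi^2=\varphi^j \qquad (j\ge1).
\]
It uses $\varphi^2=k\varphi+1\ge\varphi+1$ and $F_0=0$; for $j=1$ the sum is simply $1\le\varphi$. Take $m>n$ and write $j=m-n\ge1$. The four quantities needed are then:
\begin{align*}
q_{\ff_n}(F_m) &\le F_{j-1}+F_j\le\varphi^{j},\\
q_{\ff_n}(F_{m+1}) &\le F_j+F_{j+1}\le\varphi^{j+1},\\
q_{\ff_m}(F_n) &\le |F_{-j-1}|+|F_{-j}|=F_{j+1}+F_j\le\varphi^{j+1},\\
q_{\ff_m}(F_{n+1}) &\le F_j+F_{j-1}\le\varphi^{j}.
\end{align*}
Hence $\log_\varphi$ of the maximum is at most $m-n+1$. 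Combining the lower and upper bounds yields
\[
|m-n|-1\le d_{\varphi_k}(\ff_m,\ff_n)\le|m-n|+1,
\]
where $\ff_n=\{F_{k,n},F_{k,n+1}\}$. This is the claimed $(1,1)$-quasi-isometric embedding.

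The only point that is not a mechanical transcription is this upper bound. The identity $F_{j-1}+F_j=F_{j+1}$ fails for $k\ge2$, so the step through Lemma~\ref{lem:upper} must be replaced by the displayed estimate via $\varphi^2\ge\varphi+1$. I would also double-check the small-index edge cases ($j=1$, $F_0=0$) and the sign convention for negative indices.
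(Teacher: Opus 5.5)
Your argument is correct and is essentially the paper's proof: the Honsberger-type solution $(F_{k,m-n-1},F_{k,m-n})$ gives the upper bound, the ratio $F_{k,m}/F_{k,n+1}$ (applied to $F_{k,m+1}$) gives the lower bound, and growth estimates convert both to $\log_{\varphi_k}$. The only difference is cosmetic: you use the single bound $\varphi_k^{n-2}\le F_{k,n}\le\varphi_k^{n-1}$ and deduce $F_{k,j-1}+F_{k,j}\le\varphi_k^{j}$ from $1+\varphi_k\le\varphi_k^2$, whereas the paper proves $k\varphi_k^{n-2}\le F_{k,n}\le k\varphi_k^{n-1}$ and the two-term bound by separate inductions, and both yield the same exponents.
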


To prove Theorem~\ref{thm:k-main}, we generalize the lemmas from Subsection~\ref{sec:fibonacci}.

\begin{lemma}[{\cite[Proposition 14]{k-Fibonacci}}] \label{lem:k-honsberger}
Let $k \in \NN$.  
For all $m,n \in \ZZ$, we have
\[
  F_{k,m+n}
  =
  F_{k,m} F_{k,n+1}
  + F_{k,m-1} F_{k,n}.
\]
\end{lemma}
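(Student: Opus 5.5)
The plan is to imitate the proof of Lemma~\ref{lem:honsberger} with the matrix $A_k = \begin{bmatrix} k & 1 \\ 1 & 0 \end{bmatrix}$ in place of $A$.

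\textbf{Step 1 (nonnegative exponents).} First show by induction on $n \ge 1$ that
\[
A_k^n = \begin{bmatrix} F_{k,n+1} & F_{k,n} \\ F_{k,n} & F_{k,n-1} \end{bmatrix}.
\]
The base case $n=1$ holds because $F_{k,2}=k$, $F_{k,1}=1$ and $F_{k,0}=0$. For the inductive step, multiply on the right by $A_k$. The new first column is $(kF_{k,n+1}+F_{k,n},\, kF_{k,n}+F_{k,n-1})^T = (F_{k,n+2}, F_{k,n+1})^T$ by the recurrence, and the new second column is $(F_{k,n+1}, F_{k,n})^T$. The case $n=0$ gives the identity matrix, since $F_{k,-1}=1$ (from the recurrence run backwards: $F_{k,1}=kF_{k,0}+F_{k,-1}$).

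\textbf{Step 2 (negative exponents).} Extend the formula to all $n \in \ZZ$. Since $\det A_k = -1$, we have
\[
A_k^{-1} = \begin{bmatrix} 0 & 1 \\ 1 & -k \end{bmatrix}.
\]
The cleanest route avoids the parity formula altogether. The bi-infinite sequence satisfies the recurrence for every index, so the same one-step computation also works with left multiplication by $A_k^{-1}$. Hence an induction downward from $n=0$ proves the formula for all negative $n$. Alternatively, one can write $A_k^{-n} = (A_k^n)^{-1} = (-1)^n \begin{bmatrix} F_{k,n-1} & -F_{k,n} \\ -F_{k,n} & F_{k,n+1} \end{bmatrix}$ and match the entries with the stated identity $F_{k,-n}=(-1)^{n+1}F_{k,n}$.

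\textbf{Step 3 (conclusion).} Compare the $(1,2)$ entries of $A_k^{m+n} = A_k^m A_k^n$. The left side gives $F_{k,m+n}$. The right side is the first row of $A_k^m$, namely $(F_{k,m+1}, F_{k,m})$, times the second column of $A_k^n$, namely $(F_{k,n}, F_{k,n-1})^T$. This gives $F_{k,m+1}F_{k,n} + F_{k,m}F_{k,n-1}$.

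That is the stated identity with the roles of $m$ and $n$ swapped. Since $m+n$ is symmetric and the identity holds for all pairs in $\ZZ^2$, the swap is harmless. Equivalently, one can use $A_k^{m+n} = A_k^n A_k^m$ (powers of $A_k$ commute) to read off exactly $F_{k,m}F_{k,n+1}+F_{k,m-1}F_{k,n}$.

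The only point needing care is Step 2, the extension to negative indices. The backward-recurrence induction handles it uniformly, so I expect no real obstacle there.
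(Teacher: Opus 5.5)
Your proof is correct and follows the paper's argument: establish the matrix-power formula $A_k^n=\bigl[\begin{smallmatrix}F_{k,n+1}&F_{k,n}\\F_{k,n}&F_{k,n-1}\end{smallmatrix}\bigr]$ for all $n\in\ZZ$, then compare entries of $A_k^{m+n}=A_k^mA_k^n$. You also supply details the paper omits: the induction itself, and a backward-recurrence extension to negative exponents that replaces the parity identity $F_{k,-n}=(-1)^{n+1}F_{k,n}$. Your handling of which entry, or which order of factors, gives the identity with $m,n$ exactly as stated is also right.
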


\begin{proof}
For $n \in \ZZ$, we have
\[
  \begin{bmatrix}
    k & 1 \\
    1 & 0
  \end{bmatrix}^n
  =
  \begin{bmatrix}
    F_{k,n+1} & F_{k,n} \\
    F_{k,n} & F_{k,n-1}
  \end{bmatrix}.
\]
The identity follows by comparing the entries of $A^{m+n}=A^mA^n$.
\end{proof}


\begin{lemma}\label{lem:metalic_ratio}
Let $k \in \NN$.  For all $n \in \NN$, the following inequalities hold:

\begin{enumerate}[label=$(\arabic*)$]
\item  $k \varphi_k^{n-2}
  \le
  F_{k,n}
  \le
  k \varphi_k^{n-1}$
\item
  $\varphi_k^{n-2}
  \le
  F_{k,n-2} + F_{k,n-1}
  \le
  \varphi_k^{n-1}$
\end{enumerate}
\end{lemma}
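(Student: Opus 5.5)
The plan is a two-step induction on $n$, as in Lemma~\ref{lem:golden_ratio}, using $\varphi_k^2 = k\varphi_k + 1$. The key observation is that both sequences in question satisfy the same linear recurrence. The sequence $a_n = F_{k,n}$ does by definition. For $b_n = F_{k,n-2}+F_{k,n-1}$ we get
\[
b_{n+2} = F_{k,n}+F_{k,n+1} = (kF_{k,n-1}+F_{k,n-2}) + (kF_{k,n}+F_{k,n-1}) = k b_{n+1} + b_n .
\]
So for both $(1)$ and $(2)$ it suffices to check the two base cases $n=1,2$ and then propagate the bounds.

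For the inductive step, suppose a sequence $c_n$ satisfies $c_{n+2}=kc_{n+1}+c_n$, and $C\varphi_k^{j-2}\le c_j\le C\varphi_k^{j-1}$ for $j=n,n+1$, with $C>0$ a constant. Since $k\ge 1>0$, the inequalities can be added after multiplying by $k$. This gives
\[
c_{n+2}\le C(k\varphi_k^{n}+\varphi_k^{n-1}) = C\varphi_k^{n-1}(k\varphi_k+1) = C\varphi_k^{n+1},
\]
and likewise $c_{n+2}\ge C\varphi_k^{n-2}(k\varphi_k+1)=C\varphi_k^{n}$. This is exactly the claimed bound at index $n+2$. Part $(1)$ uses $C=k$ and part $(2)$ uses $C=1$.

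The base cases are where some care is needed, and this is the only real obstacle. For $(1)$:
\begin{itemize}
\item At $n=1$ we need $k\varphi_k^{-1}\le 1\le k$. The left inequality holds because $\varphi_k=\frac{k+\sqrt{k^2+4}}{2}\ge k$, and the right one because $k\in\NN$.
\item At $n=2$ we have $F_{k,2}=k$, and need $k\le k\le k\varphi_k$, which holds since $\varphi_k\ge 1$.
\end{itemize}
For $(2)$, first extend the recurrence backwards: $F_{k,1}=kF_{k,0}+F_{k,-1}$ gives $F_{k,-1}=1$ (consistent with $F_{k,-n}=(-1)^{n+1}F_{k,n}$). Then:
\begin{itemize}
\item At $n=1$, $b_1=F_{k,-1}+F_{k,0}=1$, and we need $\varphi_k^{-1}\le 1\le 1$.
\item At $n=2$, $b_2=F_{k,0}+F_{k,1}=1$, and we need $1\le 1\le\varphi_k$.
\end{itemize}
All four checks are immediate from $\varphi_k\ge 1$, which completes the induction.
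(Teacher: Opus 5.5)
Your proof is correct and follows the same route as the paper, which says only that both parts follow by induction using $\varphi_k^2=k\varphi_k+1$. You supply the details the paper leaves implicit: the observation that $F_{k,n-2}+F_{k,n-1}$ satisfies the same recurrence, the two-step inductive estimate, and the base cases, including $F_{k,-1}=1$ and $\varphi_k>k$.
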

\begin{proof}
Both statements follow by induction, using the identity $\varphi_k^2=k\varphi_k+1$.
\end{proof}

As in the proof of Theorem \ref{thm:main}, we consider the Diophantine equation
\[
  F_{k,m}
  =
  x F_{k,n} + y F_{k,n+1}.
\]
By Lemma~\ref{lem:k-honsberger}, the pair
\(
  (x,y) = (F_{k,m-n-1},\, F_{k,m-n})
\)
satisfies this equation.

Let $\ff_{k,n} = \{F_{k,n}, F_{k,n+1}\}$. 
The following generalizations of Lemmas~\ref{lem:lower} and~\ref{lem:upper} hold; the proofs are analogous and omitted.
\begin{lemma} \label{lem:lower_general}
 Let $k \in \NN$. For all $m,n \in \NN$ satisfy $m > n$, we have
\[
 q_{\ff_{k,n} }(F_{k,m}) \geq \frac{F_{k,m}}{F_{k, n+1}}.
\]
\end{lemma}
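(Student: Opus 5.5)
The argument copies the proof of Lemma~\ref{lem:lower}: bound $|xF_{k,n}+yF_{k,n+1}|$ for an arbitrary integer solution $(x,y)$ of
\[
  F_{k,m} = xF_{k,n} + yF_{k,n+1}.
\]
Such solutions exist because consecutive $k$-Fibonacci numbers are coprime. This follows either by a Euclidean-algorithm induction on the recurrence $F_{k,n+2}=kF_{k,n+1}+F_{k,n}$, or from the determinant of the matrix power in Lemma~\ref{lem:k-honsberger}, which is $(-1)^n$. In particular $\ff_{k,n}\in\XX$ and $q_{\ff_{k,n}}(F_{k,m})$ is well defined.

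The only ingredient beyond the triangle inequality is monotonicity: for $n\in\NN$,
\[
  0 \le F_{k,n} \le F_{k,n+1}.
\]
We prove this by induction from $F_{k,0}=0$ and $F_{k,1}=1$. If $F_{k,n},F_{k,n+1}\ge 0$, then $F_{k,n+2}=kF_{k,n+1}+F_{k,n}\ge F_{k,n+1}\ge 0$, since $k\ge 1$. Nonnegativity also follows from Lemma~\ref{lem:metalic_ratio}(1).

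For the main step, let $(x,y)\in\ZZ^2$ be any solution. Since $F_{k,m}>0$,
\[
  F_{k,m} = |xF_{k,n}+yF_{k,n+1}| \le |x|F_{k,n}+|y|F_{k,n+1} \le (|x|+|y|)\,F_{k,n+1}.
\]
Dividing by $F_{k,n+1}>0$ gives $|x|+|y|\ge F_{k,m}/F_{k,n+1}$. Taking the minimum over all solutions yields the claimed inequality.

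None of these steps is a real obstacle; the only things to check are the positivity and monotonicity facts above. The hypothesis $m>n$ is not actually used beyond guaranteeing $m,n\in\NN$.
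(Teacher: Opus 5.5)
Your proposal is correct and is essentially the paper's approach: the paper omits this proof as analogous to Lemma~\ref{lem:lower}, which is exactly the bound $F_{k,m} \le (|x|+|y|)F_{k,n+1}$ from the triangle inequality. Your extra checks — coprimality via the determinant $(-1)^n$, and $0 \le F_{k,n} \le F_{k,n+1}$ by induction — are the details that the word ``analogous'' leaves to the reader.
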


\begin{lemma} \label{lem:upper_general}
 Let $k \in \NN$. For all $m,n \in \NN$, we have
\[
  q_{\ff_{k,n}}(F_{k,m})
  \le
  |F_{k,m-n-1}| + |F_{k,m-n}|
  =
 \begin{cases}
 F_{k,m-n-1} + F_{k,m-n} & \text{if $m \geq n$}, \\
 F_{k,n-m} + F_{k,n-m+1} & \text{if $m \leq n$}.
 \end{cases}
\]
\end{lemma}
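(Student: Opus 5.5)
The plan is to follow the proof of Lemma~\ref{lem:upper}: exhibit an explicit solution of the Diophantine equation $F_{k,m} = xF_{k,n} + yF_{k,n+1}$ and compute $|x|+|y|$. By Lemma~\ref{lem:k-honsberger}, applied with $m$ replaced by $m-n$ and $n$ kept as $n$, we get
\[
F_{k,m} = F_{k,m-n}F_{k,n+1} + F_{k,m-n-1}F_{k,n}.
\]
So $(x,y)=(F_{k,m-n-1},F_{k,m-n})$ is an admissible pair. Since $q_{\ff_{k,n}}(F_{k,m})$ is a minimum over all admissible pairs, it is at most $|F_{k,m-n-1}|+|F_{k,m-n}|$. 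This gives the inequality.

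The remaining work is evaluating the absolute values in the two cases.

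If $m\ge n$, then $m-n\ge 0$ and $m-n-1\ge -1$. For $j\ge 0$ we have $F_{k,j}\ge 0$, by induction from $F_{k,0}=0$, $F_{k,1}=1$ and $k\ge 1$. The only index that might be negative is $m-n-1=-1$, when $m=n$. There $F_{k,-1}=(-1)^{2}F_{k,1}=1>0$. So both terms are nonnegative and the absolute values can be dropped, giving $F_{k,m-n-1}+F_{k,m-n}$.

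If $m\le n$, set $j=n-m\ge 0$. Then $m-n=-j$ and $m-n-1=-(j+1)$. Using $F_{k,-\ell}=(-1)^{\ell+1}F_{k,\ell}$ (valid also for $\ell=0$, since $F_{k,0}=0$), we get $|F_{k,-j}|=F_{k,j}$ and $|F_{k,-(j+1)}|=F_{k,j+1}$. Their sum is $F_{k,n-m}+F_{k,n-m+1}$, as claimed.

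There is no real obstacle. The only points needing care are the boundary indices $m=n$, which involves $F_{k,-1}$ and $F_{k,0}$, and the nonnegativity of $F_{k,j}$ for $j\ge 0$, both handled above. Note that in the overlapping case $m=n$ the two formulas agree, both giving $F_{k,-1}+F_{k,0}=1=F_{k,0}+F_{k,1}$, which is consistent with $q_{\ff_{k,n}}(F_{k,n})=1$.
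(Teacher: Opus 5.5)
Your proof is correct and is essentially the paper's own (omitted, ``analogous'') argument. As in Lemma~\ref{lem:upper}, you apply Lemma~\ref{lem:k-honsberger} with $m$ replaced by $m-n$ to get the explicit solution $(x,y)=(F_{k,m-n-1},F_{k,m-n})$, and then evaluate the absolute values using $F_{k,-\ell}=(-1)^{\ell+1}F_{k,\ell}$ together with the nonnegativity of $F_{k,j}$ for $j\ge 0$; your explicit check of the boundary case $m=n$, where $F_{k,-1}=1$, fills in a detail the paper leaves implicit.
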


\begin{proof}[Proof of Theorem~{\ref{thm:k-main}}]
The proof parallels that of Theorem~\ref{thm:main}.
We prove that
\[
|m-n| -1 \leq d( \ff_{k,m}, \ff_{k,n} ) \le |m-n| +1.
\]
for all $m,n \in \NN$. 
We may assume $m>n$.  
By Lemmas~\ref{lem:metalic_ratio} (1) and~\ref{lem:lower_general}, we have
\[
\log_\varphi q_{\ff_{k,n}}(\ff_{k,m}) \ge m-n-1.
\]
By Lemmas~\ref{lem:metalic_ratio} (2) and~\ref{lem:upper_general}, we have
\[
\log_\varphi q_{\ff_{k,n}}(\ff_{k,m}) \le m-n+1 
\quad \text{and} \quad 
\log_\varphi q_{\ff_{k,m}}(\ff_{k,n}) \le m-n+1.
\]
Combining these yields the desired inequality.
\end{proof}

\subsection{$\ell$-Coprime Numbers}
For $\ell \geq 2$, we consider the set of $\ell$-coprime numbers
\[\XX^\ell = \{  \{ n_1,\dots, n_\ell\} \subset \NN \mid \gcd(n_1,\dots, n_\ell)=1 \}. \]
Note that $\XX^2=\XX$. We extend the metric $d$ on $\XX$ to $\XX^\ell$ as follows. 

For $\nn=\{n_1,\dots,n_\ell\}\in\XX^\ell$ and  $m\in \NN$, define
\[
  q_{\nn}(m)
  = \min \{ |x_1| + \dots + |x_\ell| \mid m = n_1 x_1 + \dots + n_\ell x_\ell, \; x_1, \dots, x_\ell \in \mathbb{Z} \}.
\]
For $\mm, \nn \in \XX^\ell$, set $q_{\nn}(\mm)   = \max\limits_{m \in \mm} \{ q_{\nn}(m) \}$, and define
\[
  d^\ell(\mm, \nn)
  = \log_{\varphi} \bigl( \max \{ q_{\nn}(\mm),\, q_{\mm}(\nn) \} \bigr).
\]
A similar argument as in Subsection \ref{sec:metric} shows that $d^\ell$ is a metric on $X^\ell$.

The following theorem provides a generalization of Theorem~\ref{thm:main}, which is recovered as the special case $\ell=2$.

\begin{theorem}\label{thm:l-main}
Let $\ell \geq 2$. The map
\[
  (\NN, |\cdot|) \longrightarrow (\XX^\ell, d^\ell),
  \qquad
  n \longmapsto \{F_{n}, F_{n+1}, \dots, F_{n+\ell-1}\},
\]
is a $(1,1)$-quasi-isometric embedding.
\end{theorem}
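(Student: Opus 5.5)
The plan is to copy the proof of Theorem~\ref{thm:main}. Write $\mathbf{g}_n=\{F_n,\dots,F_{n+\ell-1}\}$; repeated entries such as $F_1=F_2$ do no harm. We show that for all $m,n\in\NN$
\[
|m-n|-1\le d^\ell(\mathbf{g}_m,\mathbf{g}_n)\le |m-n|+1 .
\]
The case $m=n$ is trivial, so assume $m>n$. The key observation is a monotonicity: if a set $\mathbf{a}$ is contained in $\mathbf{b}$, then $q_{\mathbf{b}}(j)\le q_{\mathbf{a}}(j)$, because we may put zero coefficients on the extra generators. Since every consecutive pair $\ff_{p}=\{F_p,F_{p+1}\}$ with $n\le p\le n+\ell-2$ lies in $\mathbf{g}_n$, the two-generator bounds from Section~\ref{sec:main} transfer directly to $\mathbf{g}_n$.

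For the upper bounds, take an element $F_{m+i}$ of $\mathbf{g}_m$ with $0\le i\le \ell-1$. If $i\le \ell-2$, we represent it through $\ff_{n+i}\subset\mathbf{g}_n$. Lemma~\ref{lem:upper} (applied with indices $m+i>n+i$) and Lemma~\ref{lem:golden_ratio} give
\[
q_{\mathbf{g}_n}(F_{m+i})\le F_{m-n+1}\le\varphi^{m-n}.
\]
For $i=\ell-1$, we instead use $\ff_{n+\ell-2}$ and the index pair $m+\ell-1>n+\ell-2$, which gives the bound $F_{m-n+2}\le\varphi^{m-n+1}$. Hence $q_{\mathbf{g}_n}(\mathbf{g}_m)\le\varphi^{m-n+1}$.

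In the other direction, take $F_{n+i}\in\mathbf{g}_n$ and represent it through $\ff_m\subset\mathbf{g}_m$. If $n+i\le m$, Lemma~\ref{lem:upper} in the case $n+i\le m$ gives the bound $F_{m-n-i+2}\le F_{m-n+2}\le\varphi^{m-n+1}$. If $n+i>m$, the element $F_{n+i}$ lies in $\mathbf{g}_m$ itself, so its $q$-value is $1$. Either way $q_{\mathbf{g}_m}(\mathbf{g}_n)\le\varphi^{m-n+1}$, which proves the upper inequality.

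For the lower bound we generalize Lemma~\ref{lem:lower}. If $F_{m+\ell-1}=\sum_{i=0}^{\ell-1}x_iF_{n+i}$, then by the triangle inequality and the monotonicity of $F$ on positive indices,
\[
F_{m+\ell-1}\le\Bigl(\sum_i|x_i|\Bigr)F_{n+\ell-1}.
\]
Combining this with Lemma~\ref{lem:golden_ratio} gives
\[
q_{\mathbf{g}_n}(\mathbf{g}_m)\ge\frac{F_{m+\ell-1}}{F_{n+\ell-1}}\ge\frac{\varphi^{m+\ell-3}}{\varphi^{n+\ell-2}}=\varphi^{m-n-1}.
\]
Taking $\log_\varphi$ of the maximum then yields the lower inequality.

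The only point that needs care is the index bookkeeping for the extreme elements, namely $i=\ell-1$ in the first direction and $n+i>m$ in the second. There a different consecutive pair has to be chosen so that the loss stays within $+1$. Everything else reduces to Lemmas~\ref{lem:golden_ratio}, \ref{lem:lower} and~\ref{lem:upper}.
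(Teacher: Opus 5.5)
Your proof is correct and follows essentially the paper's route: the lower bound is the same triangle-inequality estimate $F_{m+\ell-1}\le(\sum_i|x_i|)F_{n+\ell-1}$, and the upper bounds come from the same Honsberger representations through a consecutive pair padded with zeros (your monotonicity remark), with each $F_{n+i}$, $n+i>m$, handled by noting it already lies in $\mathbf{g}_m$. The only cosmetic difference is that the paper always represents $F_{m+i}$ through the last pair $\{F_{n+\ell-2},F_{n+\ell-1}\}$, getting the uniform bound $F_{m+i-n-\ell+3}\le F_{m-n+2}$, whereas you use $\ff_{n+i}$ for $i\le\ell-2$; both give the same final estimate.
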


\begin{proof}The argument parallels that of Theorem~\ref{thm:main}. 
Let $\ff_{n}^\ell = \{F_{n}, F_{n+1}, \dots, F_{n+\ell-1}\}$. We prove that
\begin{equation} \label{eq:QI_general}
|m-n| -1 \leq d^\ell ( \ff_{m}^\ell , \ff_{n}^\ell ) \le |m-n| +1.
\end{equation}
for all $m,n \in \NN$. 
We may assume that $m>n$.  
Let $i \in \{0,1,\dots,\ell-1\}$.

First we estimate $q_{\ff_{n}^\ell } (F_{m+i})$. 
Assume that $m+i \ge n + \ell$; otherwise $q_{\ff_n^\ell}(F_{m+i})=1$. Then we have
\[
q_{\ff_n^\ell}(F_{m+i})\ge\frac{F_{m+i}}{F_{n+\ell-1}}.
\]
Thus, by Lemma~\ref{lem:golden_ratio}, we obtain
\begin{equation} \label{eq:l-lower}
\log_\varphi q_{\ff_n^\ell}(\ff_m^\ell) =\log_\varphi \left( \max\limits_{0\leq i \leq \ell-1}\{ q_{\ff_n^\ell}(F_{m+i}) \} \right) \ge \log_{\varphi} \frac{F_{m+\ell-1}}{F_{n+\ell-1}} \ge m-n-1.
\end{equation}

Since the Diophantine equation
\[  F_{m+i}  =  x_1 F_{n} + x_2 F_{n+1}+ \dots + x_\ell F_{n+\ell-1} \]
has a solution $ (x_1, \dots, x_\ell) = (0, \dots, 0, F_{m+i-n-\ell+1}, F_{m+i-n-\ell+2})$, we have
\[
 q_{\ff_n^\ell} (F_{m+i}) \leq F_{m+i-n-\ell+1} + F_{m+i-n-\ell+2} = 
 F_{m+i-n-\ell+3} \le F_{m-n+2}.
\]
Hence we have 
\begin{equation} \label{eq:l_m>n}
\log_\varphi q_{\ff_n}(\ff_m) \le \log_\varphi F_{m-n+2} \le m-n+1.
\end{equation}

Next we estimate $q_{\ff_{m}^\ell } (F_{n+i})$. 
Assume that $n+i < m$; otherwise $q_{\ff_{m}^\ell } (F_{n+i})=1$. 
Since the Diophantine equation
\[  F_{n+i}  =  x_1 F_{m} + x_2 F_{m+1}+ \dots + x_\ell F_{m+\ell-1} \]
has a solution
$ (x_1, \dots, x_\ell) = (F_{n+i-m-1}, F_{n+i-m}, 0, \dots, 0)$, we have
\[
 q_{\ff_m}(F_{n+i}) \leq |F_{n+i-m-1}|+|F_{n+i-m}|= F_{m-n-i+1}+F_{m-n-i} = F_{m-n-i+2} \leq F_{m-n+2}
\]
Hence we have 
\begin{equation} \label{eq:l_m<n}
\log_\varphi q_{\ff_m}(\ff_n)  \le \log_\varphi F_{m-n+2} \le m-n+1.
\end{equation}

Combining \eqref{eq:l-lower}, \eqref{eq:l_m>n} and \eqref{eq:l_m<n}, we obtain \eqref{eq:QI_general}.
\end{proof}

\section*{Acknowlegment}
The author would like to thank Morimichi Kawasaki for proposing the problem that led to the main result. 
He also thanks Hiroki Kodama for helpful suggestions regarding Theorem~\ref{thm:l-main}, which enabled the removal of unnecessary assumptions.

The author is partially supported by JSPS KAKENHI Grant Number JP24K16921.

\bibliographystyle{abbrv}
\bibliography{reference}


\end{document}